\newtheorem{theorem}{Theorem}
\newtheorem{corollary}{Corollary}
\newtheorem{lemma}{Lemma}
\providecommand{\red}[1]{\textcolor{red}{\bf #1}}
\begin{document}

\bibliographystyle{unsrt}

\title{Stability and steady state of complex cooperative systems: a diakoptic approach}

\date{\today}

\providecommand{\red}[1]{\textcolor{red}{#1}}
\providecommand{\green}[1]{\textcolor{green}{#1}}

%\pacs{87.10.Hk}

% \author[1]{Don Joe}
% \author[2]{Smith K.}
% \author[1]{Wanderer}
% \author[1]{Static}
% \affil[1]{TeX.SX}
% \affil[2]{Both on a bus}

 \author[1,2]{Philip Greulich} \affil[1]{School of Mathematical Sciences, University of Southampton, Southampton, UK}
  \author[1,2]{Ben D.~MacArthur} 
 \author[1,2]{Cristina Parigini} 
 \author[1,2]{Rub\'en J.~S\'anchez Garc\'ia}
 \affil[2]{Institute for Life Sciences, University of Southampton}

%\noindent

\maketitle
\begin{abstract}
Cooperative dynamics are common in ecology and population dynamics. However, their commonly high degree of complexity with a large number of coupled degrees of freedom renders them difficult to analyse. Here we present a graph-theoretical criterion, via a diakoptic approach (``divide-and-conquer'') to determine a cooperative system's stability by decomposing the system's dependence graph into its strongly connected components (SCCs). In particular, we show that a linear cooperative system is Lyapunov stable if the SCCs of the associated dependence graph all have non-positive dominant eigenvalues, and if no SCCs which have dominant eigenvalue zero are connected by a path. 
\end{abstract}

\section{Introduction}

Cooperative systems are a wide class of dynamical systems characterised by a non-negative  dependence between components \cite{Hirsch2005}. Common examples are (bio-)chemical reaction networks with mutually activating interactions and compartmental dynamics, where a conserved quantity transits between different compartments or states \cite{Godfrey_comp-models_book1983,WalterContreras1999}. However, cooperative systems also include non-conserved replicator dynamics, such as  (multi-species) population dynamics, where a population of replicators transits between different states/compartments. Examples for the latter are organisms which transit through life cycles or tissue cells (e.g. stem cells) which proliferate, switch between different phenotypes \cite{dyn-het_pnas}, and differentiate during biological development and in renewing tissues. If one considers the dynamics of sub-populations embedded in a larger population, then the equations describing the system are linear: while a population as a whole may be subject to a non-linear feedback (for example by a finite carrying capacity), smaller embedded sub-populations compete neutrally with each other without affecting the population as a whole. This renders the dynamics linear.

In this article we find conditions for the stability of linear cooperative systems, based on graphical criteria of the underlying dependence graph. In an ecological or biological context, stability of populations is required to maintain ecological equilibrium (population of individuals) or a functional biological tissue (population of tissue cells). In particular, instability of a tissue cell population may lead to cancer, thus the study of a cell population's stability is of high biomedical importance. However, the commonly applied property of asymptotic stability is not viable for linear systems in a biological context, since the only asymptotically stable state is extinction. In these contexts it is therefore more appropriate to study \emph{marginally stable steady states}, a form of Lyapunov stability. 

While general criteria for a cooperative system's stability are well established \cite{haddad_compartm_book2010}, real-world systems can be very complex, with a large number of variables and complex interactions, in which case their analysis is a highly challenging endeavour. Topological features of trajectories, such as compactness, can theoretically be used to determine stability \cite{Hirsch1982,Hirsch2004}, but are in practice difficult to apply without explicitly solving the underlying differential equation. To simplify the analysis of a system, it is useful to represent it as a directed graph in which dependent variables $x_i(t), i \in \mathbb N$, are nodes and links denote dependence relations between those variables. The Jacobian matrix $J = [\frac{\partial \dot x_i}{\partial x_j}]$ of such a system can be interpreted as an adjacency matrix of an underlying graph representing the mutual dependence of components. Cooperative systems are then defined by non-negativity of the Jacobian's off-diagonal entries, which corresponds to positive-only weights of links in the network. The corresponding Jacobian matrix is a Metzler matrix and thus methods based on non-negative matrices (and the Perron-Fobenius theorem) can be applied to study them \cite{Plemmons1977,Berman1989,book_nonnegmatrices}.  A paradigm to study complex systems is the \emph{diakoptic} view (``divide-and-conquer'') \cite{Kron1963}: a large interacting system is decomposed into suitable small subsystems, which are studied in isolation, a task which is usually easier to perform. Then a synthesis of subsystems yields the features of the whole system. The analogy between dynamical systems and graphs may be used to apply graph-theoretical tools to perform such a diakoptic decomposition of the system (i.e. graph) into smaller sub-systems which can significantly simplify the analysis of stability features of the respective system.

%Here, we seek such qualitative criteria for the stability of complex linear cooperative dynamical systems, based on structural features of the underlying dependence graph, viz. the hierarchical arrangement of its strongly connected components. , which is in the spirit of May’s approach to determine a system’s stability through the sign of links of the underlying graph \cite{May_qual_crit_feedback_1973}. 

A diakoptic approach, based on the decomposition of the underlying graph into its \emph{strongly connected components (SCC)} has been used to determine asymptotic stability of cooperative systems \cite{Kevorkian1975}. An SCC is a subset of nodes which are all mutually reachable by directed paths. Simply speaking, a system is asymptotically stable if, and only if, all its SCCs, when decoupled from each other, are asymptotically stable. This holds, since the eigenvalues of the system's adjacency matrix are the union of the SCCs' eigenvalues \cite{Kevorkian1975,Berman1989}, which can also easily be checked by evaluating row sums of the dynamical matrix \cite{Siljak1975}. However, this criterion cannot be straight-forwardly generalised to determine marginal stability; conclusions about marginal stability vs. instability can in general not be drawn just by considering SCCs in isolation and a system may be unstable even if there is no unstable individual SCC. Only for linear compartmental systems -- i.e. cooperative systems that feature a conserved quantity -- the existence of a marginally state can be found through analysis of SCCs in isolation: if there is at least one singular SCC, a so-called \emph{trap}, then a non-trivial marginally stable steady state exists \cite{FosterJacquez1975, Jacquez2008}.  However, this criterion cannot be applied to linear cooperative systems in general, when dynamics are not conserved. 

%The requirement of marginal stability is yet crucial for many linear systems such as the dynamics of neutrally competing subsets of a populations; these are linear since typically non-linear feedback affects the population as a whole but not small subsets. In a linear system, the only asymptotically stable solution is the zero-vector, i.e. when the population vanishes. If one is interested in conditions to maintain a (Lyapunov) stable non-vanishing population in the context of neutral competition, one must consider marginally stable solutions, a \emph{steady state}, which is not as strictly controlled as asymptotically stable systems but does not diverge nor vanish.

%Another example of linear cooperative systems where the marginally stable state is of high interest are Markov processes: a Markov process' Master equation represents a linear cooperative system since stochastic transition rates are non-negative. Thus, the steady state solution of such a Master equation is a marginally stable state of the corresponding Master equation. Nonetheless, due to the conservation of probability, this marginally stable \emph{steady state} resists perturbations, thus all allowed perturbations are within the steady states' stable subspace.

Here, we introduce a diakoptic approach to determine the stability of general linear cooperative systems, which is also applicable for non-conserved systems, and allows us to identify conditions for marginal stability. This approach is based on graphical criteria of the underlying dependence graph when decomposed for its SCCs. The stability can then be inferred from (i) the spectrum of the Jacobian matrices of isolated SCCs, and (ii) the hierarchical arrangement of the SCCs. Our main result is Theorem \ref{thm:marg_stab} (illustrated in Fig.~\ref{illust_theorem_fig}) which states that for (marginal) stability to prevail, no SCC may have positive eigenvalues, and any otherwise singular SCCs may not stand in any hierarchical relation to each other, i.e. there may be no (directed) path connecting them. This reflects the principle that the larger and more connected complex systems are, the more likely they are to become unstable \cite{May1972}.

\section{Results}

We consider a generic cooperative linear dynamical system of a positive quantity (`mass') $m$ on a directed weighted graph with $n$ nodes, whereby we denote $m_{i}=m_i(t)$ as  the mass on node $i = 1,...,n$ at time $t$. 
%The configurations of $m_i$ depend on time $t$, thus $m_i = m_i(t)$. 
% The dynamics involve the transfer of mass between nodes, which conserve the \rm total mass, as well as reproduction and loss of mass which allow non-conservation of mass. More accurately, denoting by $m_{i} \geq 0$ the mass on node $i$ ($i=1,...,m$) the mass $m_{i}$  is transferred to node $j$ with a rate $\omega_{ji} \geq 0$ ($\omega_{ii} = 0$) and the net rate of gain vs. loss on node $i$ is $\mu_{i}$, so that  the system is described by the differential equations 
%\begin{align}
%m_{i}'(t) = \omega_{ij} m_{j}(t) + (\mu_{i} - \sum_{j} \omega_{ji}) m_{i} 
%\end{align}
The state vector of the system is $\mathbf m = (m_{1},m_{2},...,m_{n})^{T}$ and the system is written as
\begin{align}
\label{dyn-sys_eq}
\frac{d}{dt}\mathbf m(t) = A \, \mathbf m(t)
\end{align}
for a $n \times n$  real square matrix
 \begin{align}
 A = [a_{ij}], \mbox{ with } a_{ij} \geq 0 \, \mbox{ for } i \neq j.
 \end{align}
The condition $a_{ij} \geq 0$ for $i \neq j$, defines the system as \emph{cooperative}, since $A$ is the Jacobian matrix of the system (\ref{dyn-sys_eq}).  We note that the system is not necessarily conserved, i.e. the `mass' could replicate, such as a population of biological individuals, cells, or viruses.
%For convenience, we will in the following refer to the a dynamic system of the form, Eq.~(\ref{dyn-sys_eq}), just by the name of the corresponding matrix $A$. 

We consider the underlying directed weighted graph $G(A)$ with transposed adjacency matrix $A$, that is, the graph with $n$ nodes and a link from $j$ to $i$, weighted by $a_{ij}$, only if $a_{ij} \neq 0$. This is a finite simple graph with positively weighted edges and arbitrarily weighted self-loops. We wish to relate the stability of the fixed points of \eqref{dyn-sys_eq}  to the network structure of $G(A)$. 

Since $A$ is the Jacobian of (\ref{dyn-sys_eq}), the stability of a fixed point $\mathbf m^{*}$, defined by $A \,\mathbf m^{*} = \mathbf 0$ (that is, a 0-eigenvector of $A$), is determined by the spectral properties of $A$. For the system to be asymptotically stable, all the real parts of the eigenvalues of $A$ must be negative. In this case, however, $\det(A)\neq 0$ and the only fixed point $A\,\mathbf m^\ast=\mathbf 0$ is trivial, $\mathbf m^\ast=\mathbf 0$. As we are interested in non-trivial solutions, we focus instead on Lyapunov stable fixed points which are at least marginally stable (also called \emph{semi-stable} \cite{haddad_compartm_book2010}). This is the case if the eigenvalue of $A$ with largest real part is zero and its geometric multiplicity is equal to its algebraic multiplicity \cite{astrom_murray_feedback_book}. Our main result is a necessary and sufficient condition on the structure of the graph $G(A)$, for the dynamical system to have non-trivial, marginally stable, non-negative solutions. Note that we call a vector $\mathbf m$ (or, similarly, a matrix) \emph{non-negative}, written $\mathbf m \geq 0$, if all entries are real and non-negative, and \emph{positive}, written $\mathbf m > 0$, if all entries are real and positive.

First, we decompose $G(A)$ into its strongly connected components, as follows. A (sub-)graph is \emph{strongly connected} if for any pair of nodes $i$ and $j$ in the graph there is a directed path from $i$ to $j$ and a directed path from $j$ to $i$, that is, every pair of nodes is mutually reachable. Every directed graph can be partitioned into maximal strongly connected subgraphs, the graph's \emph{strongly connected components} (SCCs). The SCCs of a directed graph $G$ form another graph called the \emph{condensation} of $G$: in it, each node represents an SCC, and if two SCCs in $G$ are connected by at least one link, then the condensation possesses a link between them, in the same direction as in $G$ (see Fig.~\ref{illust_mapping_fig}).  The condensation of a directed graph is always a directed acyclic graph and, hence, its nodes (the SCCs of $G$) admit a \emph{topological ordering} \cite{cormen2009introduction}: an ordering $B_1, B_2, \ldots, B_h$ (from now on, we will identify the $k$th connected component of $G$ with its adjacency matrix $B_k$) such that if there is a link from $B_i$ to $B_j$ then $i\le j$ (see Fig.~\ref{illust_mapping_fig} for an example). We can extend the ordering to the nodes of $G$ so that node $u\in B_i$ appears before node $v\in B_j$ whenever $i \le j$. With respect to this this re-ordering and re-labelling of the nodes of $G$, the adjacency matrix $A$ of $G(A)$ becomes a lower triangular matrix
\begin{align}
\label{matrix_decomp_eq}
A = \left(\begin{array}{ccccc}
B_{1} & 0 & 0 & 0 & ...\\
C_{21} & B_{2} & 0 & 0 & ... \\
C_{31} & C_{32} & B_{3} & 0 & ... \\
 \vdots & \vdots & \vdots  & \ddots & 0 \\
... & ... & ... & ... &  B_{h} 
\end{array} \right),
\end{align}
where $h$ is the number of SCCs of $G(A)$, $B_{k}$ is the adjacency matrix of the $k$-th SCC ($1\le k \le h$), and $C_{kl}$ encodes the connectivity from $B_l$ to $B_k$. This is sometimes called the \emph{normal form of a reducible matrix} \cite{varga2000}. %This means that we choose an ordering such that, whenever there is a directed \emph{path} (a sequence of directed edges) from  $B_l$ to node $B_k$, $l$ comes before $k$ in the ordering. In this case, and if such a path from $B_l$ to $B_k$ exists, we also write $l \prec k$, and call $B_l$ \emph{upstream} of $B_k$ while $B_k$ is \emph{downstream} of $B_l$. If $B_l$ is connected by a single step to $B_k$ then we also call $B_l$ \emph{immediately upstream} of $B_k$ while $B_k$ is \emph{immediately downstream} of $B_l$.
If there exist a path from $k$ to $l$ (thus $k \le l$), we call  $B_k$ \emph{upstream} of $B_l$, and $B_l$ is \emph{downstream} of $B_k$. If $B_k$ is connected by a single (directed) link to $B_l$ then we also call $B_k$ \emph{immediately upstream} of $B_l$, and $B_l$ \emph{immediately downstream} of $B_k$. From now on, we will implicitly assume a topological ordering and notation as above. 
 
%%%%%%%%%%%%%%%%%%%%%%%%%%%%%%%%%%%%%%%%%%%%
\begin{figure*}
	\centering
	\includegraphics[width=0.9\columnwidth]{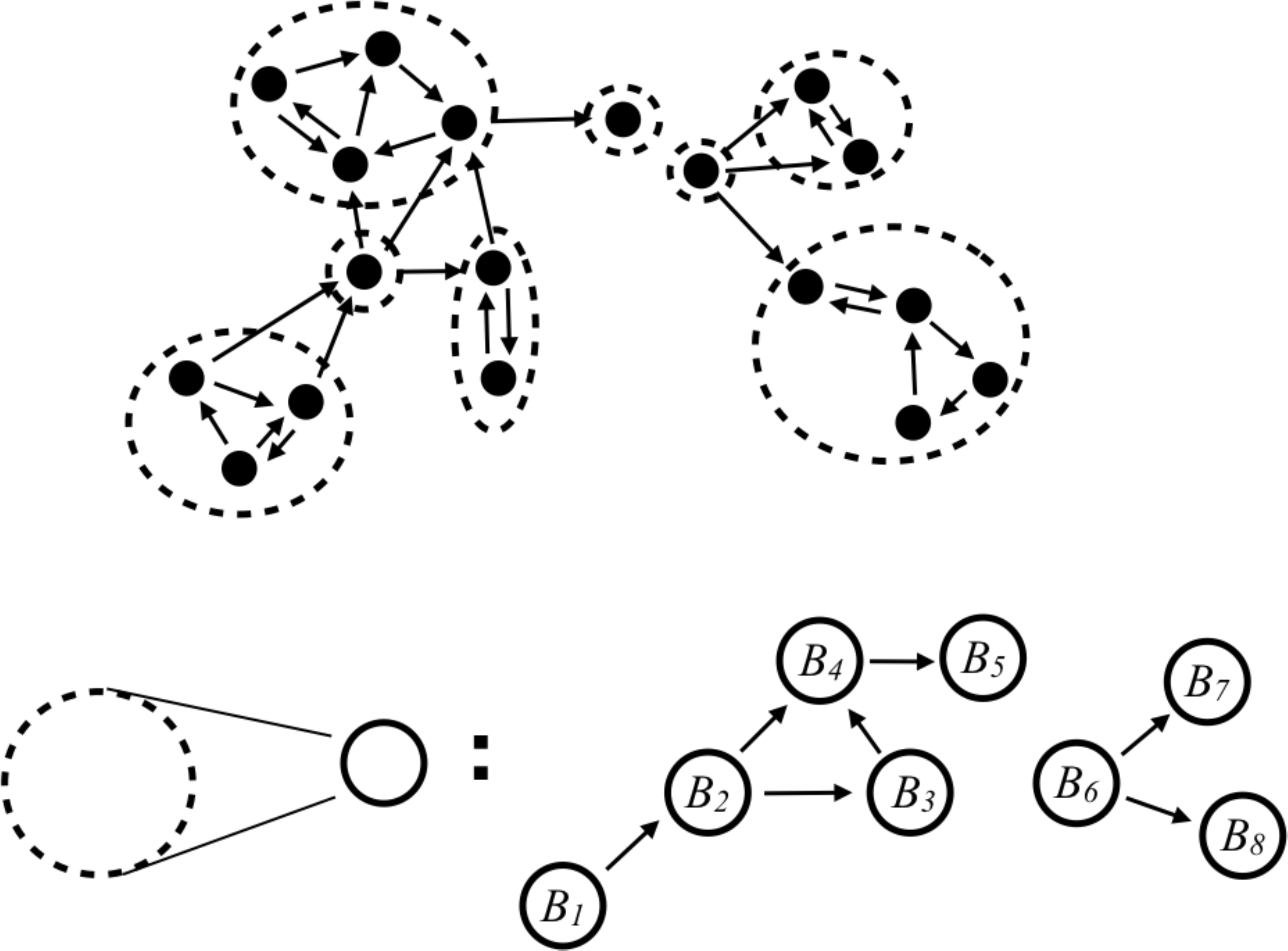}
	\caption{\label{illust_mapping_fig} Decomposition of a directed graph into SCCs and its condensation graph. (Top) A directed graph (black dots represent nodes, and arrows directed links) and its SCCs (dashed circles). Note that every node belongs to a SCC, and that a SCC can be a single node. (Bottom) Condensation of the directed graph: black circles represent the SCCs and arrows whenever two SCCs are connected via \emph{at least} one link (in the direction shown). The condensation of a graph is always a directed acyclic graph and hence admits a topological ordering, shown here as $B_1$ to $B_8$.}
\end{figure*}
%%%%%%%%%%%%%%%%%%%%%%%%%%%%%%%%%%%%%%%%%%%%

Since $A$, written in the form (\ref{matrix_decomp_eq}), is a lower triangular block matrix, the characteristic polynomial of $A$, $p_A(\lambda)=\det(\lambda I - A)$, is the product of the characteristic polynomials of the $B_k$'s
\begin{equation}\label{eqn:charpoly}
	p_A(\lambda) = p_{B_1}(\lambda)\cdot \ldots \cdot p_{B_h}(\lambda).
\end{equation}
Thus the spectrum of $A$ -- seen as a multiset -- is the union of the spectra of the $B_{k}$'s, and the algebraic multiplicity of the eigenvalues is preserved. 

Since all off-diagonal elements of $A$ (and hence of each $B_k$) are non-negative, and each $B_k$ is the adjacency matrix of a strongly connected graph, the matrices $B_k$ are irreducible Metzler matrices, for which the Perron-Frobenius Theorem applies, to the shifted eigenvalues \cite{MacCluer_PF-theor_SIAMRev2000}. Therefore, each matrix $B_k$ has a real eigenvalue $\mu_k$ with (strictly) largest real part, which is simple and has a positive eigenvector $\mathbf m > 0$. We call $\mu_k$ the \emph{dominant eigenvalue} of the matrix $B_k$.

We now introduce some further terminology. We call each SCC, and equivalently its adjacency matrix $B_k$, a \emph{block} of the system (we use the term `block' and the notation $B_k$ for both the matrix and its graph). We call a block \emph{critical} if its dominant eigenvalue $\mu_k = 0$, \emph{sub-critical} if $\mu_k <0$ and \emph{super-critical} if $\mu_k >0$. Correspondingly, we define the index subsets $I_c = \lbrace k \in 1,...,h\mid B_k \mbox{ critical} \rbrace$, $I_s = \lbrace k \in 1,...,h\mid B_k \mbox{ sub-critical } \rbrace$, $I_{sp} = \lbrace k \in 1,...,h\mid B_k \mbox{ super-critical } \rbrace$. The first things we note are (see for example \cite{Kevorkian1975}) are
\begin{lemma}
\label{lemma:unstable}
If at least one block $B_k$ of $A$ is super-critical, then the system (\ref{dyn-sys_eq}) is unstable.
\end{lemma}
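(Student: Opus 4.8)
The plan is to use the block-triangular structure of $A$ to transfer the super-critical eigenvalue of the offending block $B_k$ to the full matrix $A$, and then invoke the classical spectral criterion for instability of a linear system. In other words, the diakoptic input here is entirely contained in the factorisation \eqref{eqn:charpoly}, which we have already established, together with the Perron--Frobenius property of the irreducible Metzler blocks.

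Concretely, I would proceed in three short steps. First, by \eqref{eqn:charpoly} the characteristic polynomial of $A$ is the product $p_{B_1}(\lambda)\cdots p_{B_h}(\lambda)$, so the spectrum of $A$, viewed as a multiset, is the union of the spectra of the blocks $B_1,\dots,B_h$; in particular every eigenvalue of $B_k$ is an eigenvalue of $A$. Second, since $B_k$ is an irreducible Metzler matrix, the Perron--Frobenius theorem (applied to a shift $B_k+sI$ with $s$ large enough to make it non-negative) guarantees that its dominant eigenvalue $\mu_k$ is real and simple with a positive eigenvector; by the hypothesis that $B_k$ is super-critical we have $\mu_k>0$. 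Combining these two observations, $\mu_k$ is a strictly positive real eigenvalue of $A$. Third, choosing a real eigenvector $\mathbf v\neq \mathbf 0$ with $A\mathbf v=\mu_k\mathbf v$, the functions $\mathbf m(t)=\varepsilon\, e^{\mu_k t}\mathbf v$ solve \eqref{dyn-sys_eq} for every $\varepsilon>0$ and satisfy $\|\mathbf m(t)\|=\varepsilon e^{\mu_k t}\|\mathbf v\|\to\infty$; since $\varepsilon$ may be taken arbitrarily small while the trajectory still leaves every bounded neighbourhood of a fixed point, and since for a linear system the stability of any fixed point $\mathbf m^\ast$ with $A\mathbf m^\ast=\mathbf 0$ is equivalent (after translation) to that of the origin, no fixed point is Lyapunov stable, i.e.\ the system is unstable \cite{astrom_murray_feedback_book}.

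I do not expect a genuine obstacle in this lemma: the content is carried by the block-triangular spectral decomposition and the Perron--Frobenius structure of the $B_k$, both already in hand. The only point that deserves a careful word rather than a calculation is the phrasing of the conclusion — namely that an eigenvalue with positive real part (here, in fact, a positive real eigenvalue) precludes not just asymptotic stability but Lyapunov stability of every fixed point — which is exactly the standard linear instability criterion and is where the translation-invariance remark above is used.
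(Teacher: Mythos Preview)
Your proposal is correct and follows essentially the same route as the paper: the paper simply invokes the multi-set spectral decomposition \eqref{eqn:charpoly} together with the standard criterion that a positive real-part eigenvalue of $A$ implies instability, which is exactly your argument with the explicit $e^{\mu_k t}\mathbf v$ trajectory and the translation-to-the-origin remark spelled out. The additional Perron--Frobenius detail you give (that $\mu_k$ is real) is harmless but not strictly needed here, since the instability criterion only requires $\mathrm{Re}\,\mu_k>0$.
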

%and 
\begin{lemma}
\label{lemma:as-stable}
The system (\ref{dyn-sys_eq}) is asymptotically stable if and only if all blocks $B_k$ of $A$ are sub-critical.
\end{lemma}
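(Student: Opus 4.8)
The plan is to read the statement directly off the block-triangular form \eqref{matrix_decomp_eq} and the spectral identity \eqref{eqn:charpoly}, combined with the Perron--Frobenius property of the irreducible Metzler blocks $B_k$ recalled just before the lemmas, and the classical spectral criterion for asymptotic stability of a linear autonomous system: the system \eqref{dyn-sys_eq} is asymptotically stable if and only if every eigenvalue of $A$ has strictly negative real part (i.e.\ $A$ is Hurwitz). By \eqref{eqn:charpoly} the spectrum of $A$, as a multiset, is the union of the spectra of the $B_k$, so the whole question reduces to locating the eigenvalue of each $B_k$ with largest real part.

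First I would invoke the Perron--Frobenius theorem for the irreducible Metzler matrix $B_k$ (applied to a suitable shift, as noted in the excerpt) to conclude that its dominant eigenvalue $\mu_k$ is not merely \emph{a} real eigenvalue but the eigenvalue of $B_k$ of \emph{strictly} largest real part. Consequently, all eigenvalues of $B_k$ have negative real part precisely when $\mu_k<0$, i.e.\ precisely when the block is sub-critical. Feeding this into \eqref{eqn:charpoly}: all eigenvalues of $A$ have negative real part if and only if all eigenvalues of every $B_k$ do, i.e.\ if and only if $\mu_k<0$ for all $k$, i.e.\ if and only if every block is sub-critical. Since asymptotic stability of \eqref{dyn-sys_eq} is equivalent to $A$ being Hurwitz, this is exactly Lemma \ref{lemma:as-stable}. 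The companion Lemma \ref{lemma:unstable} falls out of the same observation in one line: if some $B_k$ is super-critical then $\mu_k>0$ is, by \eqref{eqn:charpoly}, a (real, positive) eigenvalue of $A$, so the solution $e^{\mu_k t}\mathbf v$ along the corresponding eigenvector $\mathbf v$ grows without bound, witnessing instability of every fixed point.

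I do not expect a genuine obstacle here; essentially all the work has already been done in establishing \eqref{eqn:charpoly} and the Perron--Frobenius dominance of $\mu_k$. The one point that deserves a careful sentence is the direction ``asymptotically stable $\Rightarrow$ all blocks sub-critical'' in Lemma \ref{lemma:as-stable}: this genuinely needs that $\mu_k$ is the eigenvalue of $B_k$ of \emph{maximal} real part, not just some real eigenvalue, so that no other eigenvalue of $B_k$ can lie on or to the right of the imaginary axis while $\mu_k<0$. For Lemma \ref{lemma:unstable} one should also flag that we only claim (Lyapunov) instability, and that this is justified because a super-critical block produces an eigenvalue of \emph{strictly positive} real part, which rules out even marginal stability, independently of any Jordan-block considerations.
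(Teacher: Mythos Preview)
Your proposal is correct and follows essentially the same line as the paper: the authors likewise reduce both lemmas to the standard Hurwitz criterion together with the multiset identity \eqref{eqn:charpoly} and the Perron--Frobenius fact that $\mu_k$ is the eigenvalue of $B_k$ of strictly largest real part. Your added remark that the direction ``asymptotically stable $\Rightarrow$ all blocks sub-critical'' genuinely relies on this maximality is a useful clarification the paper leaves implicit.
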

These Lemmas follow immediately from the the fact that a system is unstable if at least one real part of an eigenvalue of $A$ is positive and it is asymptotically stable if and only if all real parts of eigenvalues are negative, together with the property that the spectrum of $A$ is the multi-set union of spectra of the $B_k$ (note, however, that the `if and only if' statement only holds for Lemma \ref{lemma:as-stable}) \cite{Kevorkian1975}. In the situation of Lemma \ref{lemma:as-stable}, observe that $\det(A)\neq 0$ and hence $\mathbf m^\ast =\mathbf 0$ is the only fixed point of the system.

% By the discussion so far, and with the new terminology, the following result should be clear. 

% \begin{theorem}
% \label{thm:one_SCC_theorem}
% Let $B$ be an irreducible square matrix with non-negative off-diagonal elements. Let $\mu$ be its dominant eigenvalue. Then the linear system 
% \begin{align*}
% \frac{d}{dt}\mathbf m(t) = B \, \mathbf m(t)
% \end{align*}
% has a unique fixed point $\mathbf m^\ast = \mathbf 0$ if $\mu \neq 0$ which is asymptotically stable only if $\mu < 0$. It further has a 1-dimensional family of fixed points $\alpha \, \mathbf m^\ast$ if $\mu=0$, where $\mathbf m$ is a positive vector and $\alpha \in \mathbb{R}$ arbitrary, and all these fixed points are marginally stable. 
% \end{theorem}
% Note that, in the marginally stable case, the fixed point is unique if it represents the distribution of a fixed mass on the nodes (that is, if $\mathbf m = (m_i)$ is non-negative and satisfies $\sum_i m_i =1$), and we (necessarily) have a positive fraction of mass $m_i>0$ on \emph{each} node. 

% Theorem \ref{thm:one_SCC_theorem} covers the case when $A$ has only one SCC. We can obviously have more than one SCC on a linear cooperative system \eqref{dyn-sys_eq}, but, to maintain a marginally stable steady state, there are strict restrictions on the SCCs and the way they are connected, as we show next. 

Lemmas \ref{lemma:unstable} and \ref{lemma:as-stable} cover all cases where any super-critical blocks exist, or only sub-critical ones. In these cases, the system is either unstable, or has only a trivial (zero) fixed point. 
In the following, we will consider only the remaining cases when no super-critical blocks exist, but there is at least one critical block, and investigate the existence of non-trivial, non-negative (so that each node supports a non-negative fraction of the `mass') marginally stable fixed points. 

If no super-critical, and at least one critical, block exists, the dominant eigenvalue of $A$ is zero and, according to the Perron-Frobenius theorem, there exist non-trivial eigenvectors $\mathbf m^*$ for the eigenvalue zero. It is assured that all such $\mathbf m^*$ are equilibrium points of the system (\ref{dyn-sys_eq}), however, to be a (Lyapunov) stable equilibrium it is required that the algebraic multiplicity of eigenvalue zero is equal to its geometric one, or equivalently, equal to the dimension to the nullspace of $A$. We will approach the latter question by explicitly constructing such equilibrium sets.

%Now we consider the situation that $G(A)$ possesses more than one strongly connected component. In that case the algebraic multiplicity of the eigenvalue zero may be larger than one, since the characteristic polynomial of $A$ may be the product of the characteristic polynomials of various $B_{k}$, and each of them may contain a factor of the root zero (maximally one per $B_{k}$, though). The Perron-Frobenius theorem does not apply to $A$ as a whole. For convenience, we use the following terminology:

% \begin{description}
% \item[Definition:] An SCC G(M) ($M$ being an irreducible square matrix) is called a \emph{steady} SSC if the largest eigenvalue of $M$ is zero (thus $det(M)=0$). This is equivalent to Eq.~(\ref{dyn-sys_eq}), restricted to nodes in $G(M)$, possessing a strictly positive fixed point $\mathbf m^{*} \gg 0$ (i.e. $M \mathbf m^{*} = 0$).
% \end{description}
% We note that due to theorem \ref{one_SCC_theorem} it immediately follows that the dynamics restricted to nodes on a steady G($B_{k}$) are marginally stable.

Let us first write the equilibrium condition of the dynamical system (\ref{dyn-sys_eq}), using \eqref{matrix_decomp_eq}, as
\begin{align}
\label{steady_state_large_eq}
\left(\begin{array}{ccccc}
B_{1} & 0 & 0 & 0 & ...\\
C_{21} & B_{2} & 0 & 0 & ... \\
C_{31} & C_{32} & B_{3} & 0 & ... \\
 \vdots & \vdots & \vdots  & \ddots & 0 \\
... & ... & ... & ... &  B_{h} 
\end{array} \right)
\left(
\begin{array}{c} 
\mathbf m^*_{1} \\ \mathbf m^*_{2} \\ \mathbf m^*_{3} \\ \vdots \\ \mathbf m^*_{h}
\end{array}\right) = 0 ,
\end{align}
i.e. the equilibrium vector $\mathbf m^*$ is decomposed in the projections $\mathbf m^*_k$ on the sub-space of $B_k$, in the form $\mathbf m^* = (\mathbf m_1,\mathbf m_2,...,\mathbf m_h)^T$. For simplicity, we call $\mathbf m^*_k$ the \emph{steady state on $B_k$}. We further call a block $B_k$ \emph{trivial} if $\mathbf{m}^\ast_k=\mathbf{0}$ for all non-negative marginally stable fixed points $\mathbf m^{*}$ of the system \eqref{steady_state_large_eq}, and \emph{non-trivial} otherwise\footnote{Note that $\mathbf m^{*}_{k}$ is the $k$-th subspace component of the global steady state $\mathbf m^{*}$ of $A$, but not necessarily the steady state of the isolated subsystem of $B_{k}$.}. In other words, a trivial block is one that does not support any positive fraction of the `mass' for any non-negative fixed point. 

Our first result is a formula for the steady states $\mathbf m^*_k$ on sub-critical blocks $B_k$. Let us consider the $k$-th row of (\ref{steady_state_large_eq}), 
\begin{align}\label{eq:one-row}
\sum_{\substack{l < k}} C_{kl} \, \mathbf m^{*}_{l} + B_{k} \mathbf m^{*}_{k} = 0
\end{align}
where $B_k$ is a sub-critical block. Since all eigenvalue real parts of $B_k$ are negative, $\det(B_k) \neq 0$ and thus $B_k$ is invertible, so that we obtain a recursive formula for the steady state:
\begin{align}\label{eq:recursive}
\mathbf m^{*}_{k} = - B_{k}^{-1} \left[ \sum_{l < k} C_{kl} \, \mathbf m^{*}_{l}\right].
\end{align}
Let  $I_f \subseteq I_c$ denote the indices of the critical SCCs for which there are no other critical SCCs downstream. Let us call them \emph{final critical blocks}. With this terminology, we have, from the recursion relation above, the following:
\begin{theorem}
\label{thm:ss_sub-crit}
If $B_k$ is a sub-critical block of $A$ and Eq.~(\ref{steady_state_large_eq}) holds, then $\mathbf m^{*}_{k}$, the steady state on $B_k$, is uniquely determined by the final critical blocks upstream of $B_k$, namely
\begin{align}\label{eq:ss_sub-crit}
\mathbf m^{*}_{k} = - B_{k}^{-1} \left[ \sum_{l \in I_f} P_{kl} \, \mathbf m^{*}_{l}\right],
\end{align}
where 
\begin{align}\label{eq:P-matrix}
P_{kl} = \sum_{(l_1,l_2,...,l_n) \in \mathcal P_{kl}} (-1)^{n-1} C_{kl_1}B_{l_1}^{-1}C_{l_1l_2}B_{l_2}^{-1} \cdots C_{l_n l}
\end{align}
and $\mathcal P_{kl}$ is the set of all paths from $B_l$ ($l\in I_f$) to $B_k$, written as a sequence of nodes $(l_1,l_2,...,l_n)$, where $n$ is the length of the path.
\end{theorem}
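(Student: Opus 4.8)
The plan is to obtain \eqref{eq:ss_sub-crit} by unrolling the recursion \eqref{eq:recursive} along the condensation DAG, organised as a strong induction over the topological order $B_1,\dots,B_h$, and then to discard the contributions of non-final critical blocks using the positivity structure of the cooperative system.

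Assume \eqref{eq:ss_sub-crit}--\eqref{eq:P-matrix} for every sub-critical block of index smaller than $k$, and let $B_k$ be sub-critical. Starting from \eqref{eq:recursive}, I split the sum $\sum_{l<k}C_{kl}\mathbf m^*_l$ according to whether the immediate predecessor $B_l$ is sub-critical or critical (there are no super-critical blocks in the case under consideration). For sub-critical $B_l$ I substitute the inductive expression for $\mathbf m^*_l$; for critical $B_l$ the term is left untouched. Iterating, the contributions organise themselves into a sum over directed paths running from some critical block down to $B_k$ through sub-critical blocks only, each path contributing the alternating product of the matrices $-B_\cdot^{-1}$ and $C_{\cdot\cdot}$ read off its edges; factoring out the leading $-B_k^{-1}$ yields the sign $(-1)^{n-1}$ and the matrix product of \eqref{eq:P-matrix}, and summing over all paths with a common critical source $B_l$ reproduces $P_{kl}$. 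The unrolling terminates because each substitution replaces $\mathbf m^*_l$ by terms of strictly smaller block index. This portion is routine linear algebra and combinatorics on the lower-triangular block structure \eqref{matrix_decomp_eq}.

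The substantive step is that only \emph{final} critical blocks survive, i.e.\ $\mathbf m^*_j=\mathbf 0$ whenever $B_j$ is critical but not final; this is where the cooperative, non-negativity structure is used, and where I expect the real work to lie. Given a non-final critical $B_j$, choose a directed path from $B_j$ to a critical block $B_q$ whose intermediate blocks are all sub-critical -- take $B_q$ to be the first critical block met along some path from $B_j$ to a downstream critical block. Dotting the $q$-th row of \eqref{steady_state_large_eq} with the strictly positive left Perron eigenvector of $B_q$, and using $\mathbf m^*\ge 0$ together with $C_{q\cdot}\ge 0$, forces $C_{ql}\mathbf m^*_l=\mathbf 0$ for every immediate predecessor $B_l$ of $B_q$; the same manoeuvre at $B_j$ shows $\mathbf m^*_j\in\ker B_j$, so $\mathbf m^*_j=c\,\mathbf v_j$ with $\mathbf v_j>0$ the Perron eigenvector and $c\ge 0$. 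If $c>0$ then $\mathbf m^*_j>0$; since $-B_\cdot^{-1}>0$ strictly for sub-critical irreducible Metzler blocks ($-B_\cdot$ a nonsingular M-matrix), this strict positivity propagates down the chosen path -- each sub-critical block on it receives a non-zero non-negative inflow, hence has a strictly positive steady state -- so the predecessor $B_i$ of $B_q$ on the path satisfies $\mathbf m^*_i>0$ and therefore $C_{qi}\mathbf m^*_i\ne\mathbf 0$, contradicting the relation just derived. Hence $c=0$, non-final critical blocks drop out of the path-sum, and \eqref{eq:ss_sub-crit} follows. Uniqueness is then immediate, as the right-hand side involves only $\{\mathbf m^*_l:l\in I_f\}$ and the fixed matrices.

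In short, the unrolling and the bookkeeping of signs and path products are mechanical; the delicate point is eliminating the non-final critical blocks, which needs the truncated path with no intermediate critical block, the strict positivity of $-B_\cdot^{-1}$ and of the Perron eigenvectors, and an essential use of non-negativity of $\mathbf m^*$ -- without which the formula can genuinely fail for sign-indefinite solutions of \eqref{steady_state_large_eq}.
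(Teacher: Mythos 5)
Your proposal is correct, and its first half (unrolling the recursion along the block-triangular structure, with the sign and path-product bookkeeping) is exactly the paper's argument, which the authors dispatch in a single sentence. Where you genuinely diverge is in the step you rightly flag as substantive: the paper justifies restricting the sum to $l\in I_f$ by asserting that ``when propagating upstream, no critical block can be encountered before a final critical block is encountered,'' which as a purely topological claim is false (a critical block first met upstream of $B_k$ may have another critical block downstream along a path avoiding $B_k$). The correct resolution --- that non-final critical blocks carry zero steady state --- is only established in the paper later, inside the proof of Theorem \ref{thm:trivial-blocks}, via the matrix-exponential limit $L=\lim_{t\to\infty}e^{B_qt}$ and Lemma \ref{lemma:aux}. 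You instead pair the $q$-th row of \eqref{steady_state_large_eq} directly with the positive left Perron eigenvector of the critical block $B_q$ to kill $\mathbf{u}_qB_q\mathbf m^*_q$, conclude $C_{ql}\mathbf m^*_l=\mathbf 0$ termwise from non-negativity, and then propagate strict positivity down a critical-free path using $-B_\cdot^{-1}>0$ to reach a contradiction. This buys you two things: a self-contained proof of Theorem \ref{thm:ss_sub-crit} that does not lean on material proved afterwards (avoiding the whiff of circularity in the paper's ordering, where Theorem \ref{thm:trivial-blocks} in turn cites Theorem \ref{thm:ss_sub-crit}), and a more elementary replacement for Lemma \ref{lemma:aux}. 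You are also right that non-negativity of $\mathbf m^*$ is essential here; the theorem's statement does not say so explicitly, but the surrounding text restricts attention to non-negative fixed points, and both your argument and the paper's use it. The only caveat worth recording is that your elimination step is precisely the content of Lemma \ref{x} and Corollary \ref{cor:ss_upstream_sub_crit}, so in the paper's own economy it is not ``missing'' globally --- it is merely deferred; your version front-loads it where Theorem \ref{thm:ss_sub-crit} actually needs it.
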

This follows directly if we apply the relation Eq.~(\ref{eq:recursive}) recursively to all steady states $\mathbf m^*_k$ of sub-critical blocks $B_k$ on the right hand side of Eq.~(\ref{eq:recursive}), using that when propagating upstream, no critical block can be encountered before a final critical block is encountered.

Theorem \ref{thm:ss_sub-crit} assures that the steady state on any sub-critical block is uniquely defined by the steady states on all critical blocks upstream of the former. Furthermore, we can conclude:
\begin{corollary}
\label{cor:ss_upstream_sub_crit}
If $B_k$ is a sub-critical block of $A$, then $B_k$ is trivial if and only if all $B_l$ immediately upstream of $B_k$ are trivial.     
\end{corollary}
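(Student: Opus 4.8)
The plan is to base everything on the recursion~\eqref{eq:recursive}, namely $\mathbf m^{*}_{k} = -B_{k}^{-1}\sum_{l<k} C_{kl}\,\mathbf m^{*}_{l}$, noting that $C_{kl}\neq 0$ precisely when $B_l$ is immediately upstream of $B_k$, so that the sum is really a sum over the immediate predecessors of $B_k$. Two positivity facts about irreducible Metzler matrices do the work. First, since $B_k$ is sub-critical ($\mu_k<0$), the matrix $-B_k^{-1}$ is \emph{entrywise strictly positive}; this is standard Perron--Frobenius (applied to the shift $-B_k = sI - N$ with $N = sI+B_k\ge 0$ irreducible for $s$ large, since then $\rho(N)=\mu_k+s<s$). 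Second --- the key auxiliary lemma --- for \emph{any} block $B_l$ and any non-negative fixed point $\mathbf m^{*}$ of~\eqref{dyn-sys_eq}, the component $\mathbf m^{*}_{l}$ is either $\mathbf 0$ or strictly positive.

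I would prove the auxiliary lemma by cases (recall that no block is super-critical in the regime under consideration). If $B_l$ is sub-critical, \eqref{eq:recursive} gives $\mathbf m^{*}_{l} = -B_l^{-1}\mathbf f_l$ with inflow $\mathbf f_l = \sum_{l'<l} C_{ll'}\mathbf m^{*}_{l'}\ge 0$; since $-B_l^{-1}>0$, either $\mathbf f_l=\mathbf 0$, whence $\mathbf m^{*}_{l}=\mathbf 0$, or $\mathbf f_l$ is nonzero and non-negative, whence $\mathbf m^{*}_{l}=-B_l^{-1}\mathbf f_l>0$ (a strictly positive matrix applied to a nonzero non-negative vector is strictly positive). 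If $B_l$ is critical, I would first show $\mathbf f_l=\mathbf 0$: the singular matrix $B_l$ has a strictly positive left Perron eigenvector $\mathbf w_l$ for the eigenvalue $\mu_l=0$, so $0=\mathbf w_l^{T}B_l\mathbf m^{*}_{l}=-\mathbf w_l^{T}\mathbf f_l$ forces $\mathbf f_l=\mathbf 0$ because $\mathbf w_l>0$ and $\mathbf f_l\ge 0$; then $\mathbf m^{*}_{l}\in\ker B_l$, which is one-dimensional and spanned by the strictly positive right Perron eigenvector of $B_l$, so $\mathbf m^{*}_{l}$ is a non-negative multiple of it, hence $\mathbf 0$ or strictly positive.

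The corollary then follows quickly. For the ``if'' direction: if every $B_l$ immediately upstream of $B_k$ is trivial, then for each non-negative marginally stable fixed point $\mathbf m^{*}$ we have $\mathbf m^{*}_{l}=\mathbf 0$ for all such $l$, and \eqref{eq:recursive} gives $\mathbf m^{*}_{k}=\mathbf 0$, so $B_k$ is trivial. For the ``only if'' direction I argue by contraposition: suppose some $B_{l_0}$ immediately upstream of $B_k$ is non-trivial, and fix a non-negative marginally stable $\mathbf m^{*}$ with $\mathbf m^{*}_{l_0}\neq\mathbf 0$; by the auxiliary lemma $\mathbf m^{*}_{l_0}>0$. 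Because $B_{l_0}$ is immediately upstream of $B_k$, the block $C_{k l_0}$ has a strictly positive entry, so $C_{k l_0}\mathbf m^{*}_{l_0}$ is a nonzero non-negative vector; the remaining terms $C_{kl}\mathbf m^{*}_{l}$ in the inflow are non-negative, so the total inflow into $B_k$ is a nonzero non-negative vector, and applying $-B_k^{-1}>0$ gives $\mathbf m^{*}_{k}>0$, in particular $\mathbf m^{*}_{k}\neq\mathbf 0$. Hence $B_k$ is non-trivial, as required.

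The main obstacle I anticipate is the critical-block case of the auxiliary lemma: the observation that a non-negative fixed point can inject no net ``mass'' into a critical block (orthogonality of the inflow to the strictly positive left Perron eigenvector), followed by a correct appeal to simplicity of the zero eigenvalue and strict positivity of the Perron eigenvector of the irreducible Metzler matrix $B_l$. A smaller point requiring care is the identification that $C_{kl}\neq 0$ holds exactly when $B_l$ is immediately upstream of $B_k$, which is what turns the recursion over all $l<k$ into a statement about the immediate predecessors only.
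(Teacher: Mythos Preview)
Your argument is correct and follows the same backbone as the paper's proof: use the recursion~\eqref{eq:recursive} for the ``if'' direction, and for the converse use that $-B_k^{-1}$ is entrywise positive (M-matrix inverse) so that a nonzero non-negative inflow forces $\mathbf m^*_k>0$. The paper's proof stops there, simply asserting that if some immediately upstream $\mathbf m^*_l\neq 0$ then $\mathbf m^*_k>0$.

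Where you go further is in your auxiliary lemma that every $\mathbf m^*_l$ is either $\mathbf 0$ or strictly positive. This closes a gap the paper leaves implicit: without it, $\mathbf m^*_{l_0}\neq 0$ does not a priori guarantee $C_{kl_0}\mathbf m^*_{l_0}\neq 0$, since the nonzero entries of $\mathbf m^*_{l_0}$ might not line up with the nonzero columns of $C_{kl_0}$. Your treatment of the critical-block case of this lemma---pairing the row equation with the strictly positive \emph{left} Perron eigenvector to force the inflow to vanish, then using simplicity of the zero eigenvalue---is also noteworthy: it is a cleaner and more direct version of what the paper does later (in the proof of Theorem~\ref{thm:trivial-blocks} and Lemma~\ref{lemma:aux}) via the matrix exponential $e^{B_kt}$ and its limit. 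So your route is the same in spirit but tighter in execution, and it front-loads a fact the paper only establishes downstream.
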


\begin{proof}
From Eq.~(\ref{eq:recursive}) it directly follows that if all $B_l$ immediately upstream are trivial ($\mathbf m^*_l=0$), then $B_k$ is trivial ($\mathbf m^*_k=0$). Now let us consider the case that at least one $B_l$ immediately upstream has $\mathbf m^*_l \neq 0$. We first note that since $B_k$ is a Metzler matrix with $\det(B_k)\neq 0$, $-B_{k}$ is a non-singular M-matrix, and its inverse is a positive matrix (shown in \cite{Meyer_M-matrices_1978}). Thus $\mathbf m^{*}_{k}$ is positive if at least one $\mathbf m^*_l \neq 0$  (recall that $\mathbf m^\ast$ and the $C_{kl}$'s are non-negative). Therefore it follows: if $B_k$ is trivial, i.e. $\mathbf m^*_k=0$, then for all immediately upstream $B_l$, $\mathbf m_l = 0$, and hence $B_l$ is trivial.
\end{proof}  

Now we make a topological characterisation of the trivial blocks. 
\begin{theorem}\label{thm:trivial-blocks}
A block is trivial if and only if 
\begin{itemize}
\item[(i)] it is upstream of a critical block, or
\item[(ii)] it is a sub-critical block which is not downstream of a critical block.
\end{itemize}
\end{theorem}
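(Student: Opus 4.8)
We sketch a proof; the two implications are established separately. We stay in the regime of the surrounding discussion — no super-critical block, at least one critical block — and, as is needed for marginally stable equilibria to exist, we also assume that no directed path joins two distinct critical blocks, so that every critical block is in fact a final critical block ($I_f=I_c$) and every block strictly downstream of a critical block is sub-critical. Fix an arbitrary non-negative solution $\mathbf m^\ast=(\mathbf m^\ast_1,\dots,\mathbf m^\ast_h)^T$ of \eqref{steady_state_large_eq}. The key for the ``if'' direction is that a critical block absorbs no mass across its upstream interface: if $B_j$ is critical, take a positive left Perron eigenvector $\mathbf v_j>0$ (with $\mathbf v_j^TB_j=\mathbf 0$, available because $B_j^T$ is again an irreducible Metzler matrix of dominant eigenvalue $0$), left-multiply the $j$-th block-row of \eqref{steady_state_large_eq} by $\mathbf v_j^T$, and use non-negativity of $\mathbf v_j$, $C_{jl}$ and $\mathbf m^\ast_l$ to conclude $C_{jl}\mathbf m^\ast_l=\mathbf 0$ for every $l<j$, hence also $B_j\mathbf m^\ast_j=\mathbf 0$. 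Case (ii) is then immediate: if $B_k$ is sub-critical and downstream of no critical block, then no final critical block is upstream of $B_k$, every path set $\mathcal P_{kl}$ in Theorem~\ref{thm:ss_sub-crit} is empty, and \eqref{eq:ss_sub-crit} forces $\mathbf m^\ast_k=\mathbf 0$. For case (i), I would fix a shortest directed path $B_k=B_{k_0}\to\cdots\to B_{k_r}$ ($r\ge 1$) from $B_k$ to a critical block, so that $B_{k_1},\dots,B_{k_{r-1}}$ are sub-critical and $B_{k_r}$ is critical; the interface property at $B_{k_r}$ gives $C_{k_r,k_{r-1}}\mathbf m^\ast_{k_{r-1}}=\mathbf 0$ with $C_{k_r,k_{r-1}}\ne 0$, so $\mathbf m^\ast_{k_{r-1}}$ has a zero entry. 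Now propagate backwards: at a sub-critical block $B_{k_i}$ one has $\mathbf m^\ast_{k_i}=-B_{k_i}^{-1}\sum_{l<k_i}C_{k_i,l}\mathbf m^\ast_l$ with $-B_{k_i}^{-1}$ \emph{strictly} positive (the inverse of an irreducible non-singular M-matrix, as in the proof of Corollary~\ref{cor:ss_upstream_sub_crit}), so a single zero entry forces the whole right-hand side, hence $\mathbf m^\ast_{k_i}$, and in turn $C_{k_i,k_{i-1}}\mathbf m^\ast_{k_{i-1}}$, to vanish; iterating down to $B_{k_0}=B_k$ — closing, if $B_k$ happens to be critical, by $B_k\mathbf m^\ast_k=\mathbf 0\Rightarrow\mathbf m^\ast_k\in\mathrm{span}\,\mathbf w_k$ with $\mathbf w_k>0$ — yields $\mathbf m^\ast_k=\mathbf 0$, so $B_k$ is trivial.

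For the ``only if'' direction I would prove the contrapositive by an explicit construction: given $B_k$ of neither type, produce one non-negative solution of \eqref{steady_state_large_eq} with $\mathbf m^\ast_k\ne\mathbf 0$. Failing (ii) means $B_k$ is critical, or sub-critical and downstream of a critical block; choose a \emph{seed} $B_s:=B_k$ in the first case and $B_s:=$ any critical block upstream of $B_k$ in the second. Then $B_s$ is critical, equals or lies strictly upstream of $B_k$, and — because $B_k$ fails (i), i.e.\ is upstream of no critical block, and because no path joins two critical blocks — every block strictly downstream of $B_s$ is sub-critical. I would then define $\mathbf m^\ast_s:=\mathbf w_s$ (the positive Perron null vector of $B_s$), set $\mathbf m^\ast_l:=-B_l^{-1}\sum_{p<l}C_{lp}\mathbf m^\ast_p$ for $l$ strictly downstream of $B_s$ in topological order, and $\mathbf m^\ast_l:=\mathbf 0$ for all remaining blocks. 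One checks $\mathbf m^\ast\ge 0$ and $A\mathbf m^\ast=\mathbf 0$ block-row by block-row — rows downstream of $B_s$ hold by construction, and the others because any block immediately upstream of a ``zero'' block is again a ``zero'' block (otherwise it would place that downstream block downstream of $B_s$). A short induction along a directed path out of $B_s$ — again using strict positivity of $-B_l^{-1}$ and that a non-zero non-negative matrix sends a positive vector to a non-zero vector — shows $\mathbf m^\ast_l>0$ for \emph{every} block strictly downstream of $B_s$; hence $\mathbf m^\ast_k\ne\mathbf 0$, and $B_k$ is non-trivial.

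The crux I expect is the consistency of this construction. Seeding a critical block and letting mass percolate downstream yields an honest null vector of $A$ only if no critical block is reached downstream of the seed: a critical block receiving non-zero non-negative inflow cannot balance it, since its positive left Perron eigenvector pairs positively with that inflow, contradicting the interface property used above. This is exactly where ``no directed path between two distinct critical blocks'' is indispensable — and the statement genuinely fails without it (e.g.\ a critical block feeding both another critical block and an otherwise isolated sub-critical block renders that sub-critical block trivial although it is neither of type (i) nor (ii)). A secondary, routine-but-pervasive point is the systematic use of \emph{strict} positivity of the inverses $-B_l^{-1}$, valid because each $B_l$ is irreducible, which is what makes ``a single zero entry'' propagate in the ``if'' direction and ``a single positive predecessor component'' propagate in the ``only if'' direction.
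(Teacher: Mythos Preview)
Your argument is correct and in several respects cleaner than the paper's own proof. The central step in both is the \emph{interface property}: if $B_j$ is critical then any non-negative equilibrium must have $\sum_{l<j}C_{jl}\mathbf m^\ast_l=\mathbf 0$. The paper reaches this by multiplying the $j$-th block row by the matrix exponential $e^{B_j t}$, passing to the limit $L=\lim_{t\to\infty}e^{B_j t}$, and then invoking an auxiliary lemma (Lemma~\ref{lemma:aux}) which, via the Jordan form, shows that the positive left Perron $0$-eigenvector $\mathbf u$ of $B_j$ satisfies $\mathbf uL=\mathbf u$, whence $\mathbf u\mathbf v=\mathbf uL\mathbf v=0$ forces $\mathbf v=\mathbf 0$. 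You bypass this entirely by left-multiplying the block row directly by $\mathbf u^T$ and using $\mathbf u^TB_j=\mathbf 0$; this is the same positive left eigenvector appearing at the end of the paper's detour, and your one-line use of it is the more transparent route. The backward propagation in case~(i) is then essentially the content of the paper's Corollary~\ref{cor:ss_upstream_sub_crit}, which you unpack explicitly via strict positivity of $-B_l^{-1}$.

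Two further points are worth flagging. First, the paper's written proof really only establishes the ``if'' direction; the ``only if'' direction is deferred to the explicit nullspace construction in Eqs.~\eqref{eigenvectors_eq_1}--\eqref{eigenvectors_eq_2}, whereas you give it a self-contained treatment. Second, your observation that the ``only if'' direction genuinely needs the hypothesis $I_f=I_c$ (no directed path between distinct critical blocks) is correct and is a subtlety the paper glosses over: your counterexample --- a critical block feeding both a second critical block and an otherwise isolated sub-critical block --- indeed produces a sub-critical block satisfying neither~(i) nor~(ii) yet carrying zero mass in every non-negative equilibrium. The paper implicitly works under this hypothesis when it identifies the non-trivial critical blocks with the final ones, but does not state it as an assumption of Theorem~\ref{thm:trivial-blocks} itself.
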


Thereby all trivial blocks can be easily identified by inspecting the condensed graph and its critical blocks (Fig.~\ref{illust_mapping_fig}). 

\begin{proof}
To prove Theorem \ref{thm:trivial-blocks}, consider an equilibrium point $\mathbf m^*$. Then Eq.~(\ref{steady_state_large_eq}) holds, and in particular its $k$-th row Eq.~(\ref{eq:one-row}). Now $B_k$ is critical and hence has an eigenvalue zero ($\mu_k = 0$ by definition), $\det(B_k)=0$ and thus $B_k$ is not invertible. Let us multiply both sides of Eq.~(\ref{eq:one-row}) with the matrix exponential $e^{B_k t} = \sum_{n=1}^\infty \frac{(B_k t)^n}{n!}$ to yield,
\begin{align}
\label{mat-exp_coop_eq}
e^{ B_{k}t}  B_{k} \mathbf m^{*}_{k} =  B_{k}\left[e^{ B_{k}t} \mathbf m^{*}_{k}\right] = - e^{ B_{k}t}\left[\sum_{l<k}  C_{kl} \mathbf m^{*}_{l}\right] \,\,\, ,
\end{align}
where we used that a square matrix $M$ commutes with its exponential, $e^{M} M = M e^{M}$.  In general, $e^{Mt} \mathbf x$ is a solution of the linear ODE $\dot{\mathbf x} = M \mathbf x$ and thus converges to a linear combination of dominant eigenvectors (eigenvectors of the dominant eigenvalues) of $M$. Since $B_{k}$ is critical, the corresponding dominant eigenvalue is zero and thus $B_{k}[e^{B_{k}t} \mathbf m^{*}_{k}] \to \mathbf 0$ for $t \to \infty$. This means that $- e^{B_{k}t}[\sum_{l < k} C_{kl} \mathbf m^{*}_{l}] = \mathbf 0$ for $t \to \infty$. Let us call $L = \lim_{t \to \infty}  e^{B_{k}t}$ and $\mathbf v = \sum_{l < k} C_{kl} \mathbf m^{*}_{l}$. We then have $L\mathbf{v}=\mathbf{0}$ and we want to show that $\mathbf{v}=\mathbf{0}$. This is not true for a general vector $\mathbf{v}$ unless $L$ is invertible, but will hold for non-negative eigenvectors such as $\mathbf{v}$. In fact, the matrix $L$ is not invertible in general as all its eigenvalues are zero, except a simple eigenvalue 1 with a positive (left) eigenvector $\mathbf{u}$ (see Lemma \ref{lemma:aux} below). In this case, $\mathbf{u}L=\mathbf{u}$ and $\mathbf{u}\mathbf{v}=\mathbf{u}L\mathbf{v}=\mathbf{0}$, a contradiction, since $\mathbf{u}$ is positive and $\mathbf{v}$ is non-negative, unless $\mathbf{v}=\mathbf{0}$.

All in all, we conclude that $\mathbf v = \sum_{l <k} C_{kl} \mathbf m^{*}_{l} = 0$. Note that all entries of the matrices $C_{kl}$ and of the vector $\mathbf m^{*}_{l}$ are non-negative, so this can only be the case if, for all $l < k$, $C_{kl} = 0$ or $\mathbf m^{*}_{l} = \mathbf 0$. Since, for all immediately upstream blocks, we have $C_{kl} \neq 0$, it follows that
\begin{lemma}
\label{x}
All blocks immediately upstream of a critical block are trivial.
\end{lemma}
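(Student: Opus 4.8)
The plan is to combine the equilibrium equation on a critical block with the Perron--Frobenius structure of that block, and then exploit non-negativity together with strong connectivity. Fix an arbitrary non-negative, marginally stable fixed point $\mathbf m^{*}$ and a critical block $B_k$, and look at the $k$-th block-row of the equilibrium equation, Eq.~(\ref{eq:one-row}): it reads $B_k\,\mathbf m^{*}_{k} = -\mathbf v$ with $\mathbf v := \sum_{l<k} C_{kl}\,\mathbf m^{*}_{l}\ge 0$. Since $B_k$ is an irreducible Metzler matrix whose dominant eigenvalue is $\mu_k = 0$, Perron--Frobenius applied to the shifted non-negative irreducible matrices $B_k + cI$ and $B_k^{T} + cI$ (for large $c$) yields a strictly positive \emph{left} eigenvector $\mathbf u_k > 0$ with $\mathbf u_k B_k = \mu_k \mathbf u_k = \mathbf 0$. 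Left-multiplying $B_k\,\mathbf m^{*}_{k} = -\mathbf v$ by $\mathbf u_k$ annihilates the left-hand side, giving $\mathbf u_k \mathbf v = 0$. (Equivalently one can follow the matrix-exponential route indicated above: $e^{B_k t}\mathbf m^{*}_{k}$ is a bounded solution of $\dot{\mathbf x} = B_k \mathbf x$ because the dominant eigenvalue $0$ is simple, it tends to a $0$-eigenvector of $B_k$, so $B_k$ kills the limit and $L\mathbf v = \mathbf 0$ for $L = \lim_{t\to\infty} e^{B_k t}$; the positive left eigenvector of $L$ provided by Lemma~\ref{lemma:aux} then again forces $\mathbf u_k\mathbf v = 0$.)

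Next I would unpack non-negativity. Because $\mathbf u_k > 0$ and $\mathbf v \ge 0$, the scalar identity $\mathbf u_k \mathbf v = 0$ forces $\mathbf v = \mathbf 0$; as $\mathbf v$ is a sum of the non-negative vectors $C_{kl}\,\mathbf m^{*}_{l}$, every summand vanishes: $C_{kl}\,\mathbf m^{*}_{l} = \mathbf 0$ for all $l<k$. Now take $B_l$ \emph{immediately} upstream of $B_k$, so that some link runs from a node $u\in B_l$ to a node $v\in B_k$, i.e.\ $(C_{kl})_{vu} > 0$; reading the $v$-th component of $C_{kl}\,\mathbf m^{*}_{l} = \mathbf 0$ together with non-negativity gives $(\mathbf m^{*}_{l})_u = 0$. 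To promote ``vanishes at one node'' to ``$\mathbf m^{*}_{l} = \mathbf 0$'', I would feed this into the block-row of the equilibrium equation for $B_l$, namely $B_l\,\mathbf m^{*}_{l} = -\mathbf w_l$ with $\mathbf w_l \ge 0$: the row indexed by any node $u$ at which $\mathbf m^{*}_{l}$ vanishes gives $\sum_{u'}(B_l)_{uu'}(\mathbf m^{*}_{l})_{u'} = -(\mathbf w_l)_u \le 0$, yet every term on the left is $\ge 0$ (the diagonal term is killed by $(\mathbf m^{*}_{l})_u = 0$ and the off-diagonal entries of $B_l$ are non-negative), whence $(\mathbf m^{*}_{l})_{u'} = 0$ for every in-neighbour $u'$ of $u$ inside $B_l$. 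Iterating and using that $B_l$ is strongly connected (every node of $B_l$ reaches $u$ along a directed path), I conclude $\mathbf m^{*}_{l} = \mathbf 0$. Since $\mathbf m^{*}$ was an arbitrary non-negative marginally stable fixed point, $B_l$ is trivial.

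I expect the main obstacle to be the first step: ensuring that a critical block genuinely contributes a simple zero eigenvalue with a strictly positive left eigenvector (equivalently, that $L = \lim_{t\to\infty} e^{B_k t}$ exists and has the Perron-type structure of Lemma~\ref{lemma:aux}). This is exactly where irreducibility of $B_k$ (strong connectivity of the SCC) and the Metzler/Perron--Frobenius machinery are indispensable. The remaining steps are routine manipulations with non-negative vectors; the only mild subtlety is the strong-connectivity ``zero-propagation'' argument of the last step, which is needed because $C_{kl}\,\mathbf m^{*}_{l} = \mathbf 0$ with $C_{kl} \neq 0$ does not by itself force $\mathbf m^{*}_{l} = \mathbf 0$.
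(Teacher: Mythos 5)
Your proof is correct, and it reaches the key identity $\sum_{l<k}C_{kl}\,\mathbf m^{*}_{l}=\mathbf 0$ by the same underlying idea as the paper --- pairing the $k$-th block-row of the equilibrium equation with a strictly positive left null-vector of the critical block $B_k$ and then invoking non-negativity --- but your execution differs in two ways, both to your advantage. First, you obtain $\mathbf u_k\mathbf v=\mathbf 0$ directly from $\mathbf u_k B_k=\mathbf 0$ (Perron--Frobenius applied to the shifted matrix), whereas the paper takes a detour through the limit $L=\lim_{t\to\infty}e^{B_kt}$ and its left $1$-eigenvector (Lemma~\ref{lemma:aux}); your route is shorter and makes that auxiliary lemma unnecessary for this step. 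Second, and more substantively, you correctly observe that $C_{kl}\,\mathbf m^{*}_{l}=\mathbf 0$ with $C_{kl}\neq 0$ does \emph{not} by itself force $\mathbf m^{*}_{l}=\mathbf 0$ when both factors are merely non-negative; the paper asserts exactly this implication (``this can only be the case if, for all $l<k$, $C_{kl}=0$ or $\mathbf m^{*}_{l}=\mathbf 0$''), which as written is a gap --- it is only saved implicitly by the fact, established elsewhere, that each $\mathbf m^{*}_{l}$ is either zero or strictly positive. Your zero-propagation argument (a vanishing component of $\mathbf m^{*}_{l}$ forces all in-neighbours within $B_l$ to vanish via the $l$-th block-row and the Metzler sign structure, and strong connectivity of $B_l$ then spreads this to the whole block) closes that gap self-containedly. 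The only cost of your approach is that it does not produce the limit matrix $L$, which the paper reuses in the surrounding discussion; for the lemma itself, nothing is lost.
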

Crucially, from Theorem \ref{thm:ss_sub-crit} and Lemma \ref{cor:ss_upstream_sub_crit}, it follows that all blocks $B_m$ immediately upstream of any trivial block $B_l$ are trivial (either $B_l$ is critical, or sub-critical and trivial). By applying this argument recursively to Lemma \ref{cor:ss_upstream_sub_crit}, the first part of Theorem \ref{thm:trivial-blocks} follows. The second part is an immediate consequence of Corollary \ref{cor:ss_upstream_sub_crit}.
\end{proof}
To complete the proof above, we state and prove the following.
\begin{lemma}\label{lemma:aux}
Let $B$ be an irreducible Metzler matrix with shifted Perron-Frobenius eigenvalue 0 and positive left eigenvector $\mathbf{u}$. Then the limit matrix $L=\lim_{t\to \infty} e^{tB}$ exists and satisfies $\mathbf{u}L=\mathbf{u}$.
\end{lemma}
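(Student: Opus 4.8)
The plan is to identify the limit $L$ with the spectral projection of $B$ onto its zero-eigenspace, and then read off $\mathbf u L=\mathbf u$ almost for free.

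First I would recall the spectral picture of $B$ already used in the excerpt: since $B$ is an irreducible Metzler matrix, applying Perron--Frobenius to the nonnegative irreducible matrix $B+sI$ (for $s$ large) shows that its dominant eigenvalue---here assumed to be $0$---is real, \emph{simple}, and \emph{strictly} dominant, i.e.\ every other eigenvalue $\lambda$ of $B$ satisfies $\operatorname{Re}\lambda<0$. In particular the zero eigenvalue carries no nontrivial Jordan block. Let $\mathbf v>0$ be a (positive) right Perron eigenvector and let $\mathbf u>0$ be the left one from the statement; then $\mathbf u\mathbf v>0$.

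Next I would introduce the rank-one matrix $P:=\mathbf v\mathbf u/(\mathbf u\mathbf v)$ and check that it is the spectral projection onto $\ker B=\operatorname{span}(\mathbf v)$ along the sum of the remaining generalised eigenspaces: one has $P^2=P$, $BP=PB=0$, and the restriction of $B$ to $\operatorname{range}(I-P)$ has spectral abscissa strictly negative. Because $BP=0$, the series for the matrix exponential collapses to $e^{tB}P=P$ for all $t$, so that
\[
 e^{tB}=P+e^{tB}(I-P).
\]
On $\operatorname{range}(I-P)$ the standard estimate $\lVert e^{tB}(I-P)\rVert\le c\,t^{N}e^{-\alpha t}$, valid for some $\alpha>0$ and integer $N\ge 0$ since the spectral abscissa there is negative, forces $e^{tB}(I-P)\to 0$ as $t\to\infty$. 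Hence the limit exists and $L=P=\mathbf v\mathbf u/(\mathbf u\mathbf v)$. (Equivalently one can put $B$ in Jordan form and note that the $1\times1$ block at $0$ contributes the constant $1$ while every other block decays exponentially.)

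Finally, the identity $\mathbf u L=\mathbf u$ follows either by direct computation, $\mathbf u P=(\mathbf u\mathbf v)\mathbf u/(\mathbf u\mathbf v)=\mathbf u$, or---more robustly, without even using the explicit form of $L$---by observing that $\mathbf u B=\mathbf 0$ implies $\mathbf u B^{n}=\mathbf 0$ for all $n\ge 1$, whence $\mathbf u e^{tB}=\mathbf u$ for every $t$; letting $t\to\infty$ then gives $\mathbf u L=\mathbf u$. The only point that needs care, and the one I would write out in full, is the existence of the limit: it rests precisely on the zero eigenvalue being simple (no polynomial growth in $t$ from a Jordan block) and strictly dominant (everything else decays exponentially), both of which are guaranteed by irreducibility together with the Metzler/Perron--Frobenius structure.
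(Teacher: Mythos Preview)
Your proof is correct and rests on the same ingredients as the paper's: the zero eigenvalue is simple and strictly dominant (Perron--Frobenius for irreducible Metzler matrices), so the limit $L$ exists, and $\mathbf u$ being a left $0$-eigenvector of $B$ makes it a left $1$-eigenvector of $L$. The paper carries this out by writing $B=PJP^{-1}$ in Jordan form and computing $f_t(J)$ blockwise---precisely the parenthetical alternative you mention---whereas you work coordinate-free with the rank-one spectral projector $P=\mathbf v\mathbf u/(\mathbf u\mathbf v)$ and the decomposition $e^{tB}=P+e^{tB}(I-P)$. Your second argument for $\mathbf u L=\mathbf u$, namely $\mathbf u B=\mathbf 0\Rightarrow \mathbf u e^{tB}=\mathbf u$ for all $t$, is in fact cleaner than the paper's version, which routes the conclusion through the Jordan basis; it also makes transparent that the only genuine content of the lemma is the existence of the limit.
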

\begin{proof}
Define $f_t(x)=e^{tx}$ for $t>0$ and write $B$ in Jordan normal form as $B=PJP^{-1}$. By definition \cite{Higham2010Computing}, the matrix function $f_t(B)=e^{tB}$ equals $Pf(J)P^{-1}$, where $f(J)$ is the block diagonal matrix obtained by applying $f$ to each diagonal Jordan block, $J_i$, of $J$ as
\[
    J_i = \begin{pmatrix} 
        \lambda_i & 1 & & \\
        & \lambda_i & \ddots &\\
        & & \ddots & 1\\
        & & & \lambda_i
        \end{pmatrix}
    \ \text{ then } \  
    f_t(J_i) = \begin{pmatrix} 
        f(\lambda_i) & f_t'(\lambda_i) & \ldots & \frac{f_t^{(m_i-1)}(\lambda_i)}{(m_i-1)!}\\
        & f_t(\lambda_i) & \ddots & \vdots\\
        & & \ddots & f_t'(\lambda_i)\\
        & & & f_t(\lambda_i)
        \end{pmatrix}.
\]
The eigenvalue of $B$ with largest real part is $0$ (dominant eigenvalue), hence $\lim_{t \to \infty} f_t^{(k)}(\lambda_i) = \lim_{t \to \infty} t^k e^{t\lambda_i} = 0$, if $\lambda_i \neq 0$, and $1$ otherwise, for all $k\ge 0$. All in all, $L=PMP^{-1}$ where $M$ is the zero matrix except a single 1 in the diagonal. Its eigenvectors (the columns of $P$) are the same as the (generalised) eigenvectors of $B=PJP^{-1}$. Hence the left 0-eigenvector $\mathbf{u}$ of $B$ becomes a left 1-eigenvector of $L$, $\mathbf{u}L=\mathbf{u}$.
\end{proof}

We can also easily follow from Theorem \ref{thm:trivial-blocks} and Eq.~(\ref{mat-exp_coop_eq}):
\begin{corollary}
\label{cor:non-trivial-block_ss}
The steady states $\mathbf m^*_k$ on a non-trivial critical block $B_k$ (called a \emph{free} block) is the one-dimensional family of dominant eigenvectors (of eigenvalue zero) of $B_k$. We can write these as $\alpha_k \boldsymbol \phi_k$ where $\alpha_k \in \mathbb R$ is a free parameter, and $\boldsymbol \phi_k$ is a (normalised) dominant eigenvector of $B_k$.
\end{corollary}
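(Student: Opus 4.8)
The plan is to leverage the equilibrium equation for a critical block together with Lemma~\ref{lemma:aux} to pin down the steady state on a non-trivial critical block. First I would write the $k$-th row of the equilibrium condition as in Eq.~(\ref{eq:one-row}), namely $\sum_{l<k} C_{kl}\,\mathbf m^{*}_{l} + B_k \mathbf m^{*}_{k} = 0$. By Theorem~\ref{thm:trivial-blocks}, every block $B_l$ immediately upstream of the critical block $B_k$ is trivial, so each such $\mathbf m^{*}_{l}=\mathbf 0$; more generally, by propagating the triviality statement, $C_{kl}\mathbf m^{*}_{l}=\mathbf 0$ for all $l<k$, and hence the whole inhomogeneous term $\mathbf v=\sum_{l<k}C_{kl}\mathbf m^{*}_{l}$ vanishes. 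This reduces the equilibrium condition on $B_k$ to the homogeneous equation $B_k \mathbf m^{*}_{k}=\mathbf 0$, i.e. $\mathbf m^{*}_{k}$ is a $0$-eigenvector of $B_k$.

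Next I would invoke the Perron--Frobenius structure already established for irreducible Metzler matrices: since $B_k$ is critical, its dominant eigenvalue $\mu_k=0$ is simple, so the nullspace of $B_k$ is exactly one-dimensional and is spanned by a single (normalised) dominant eigenvector $\boldsymbol\phi_k$, which can be chosen positive. Therefore $\mathbf m^{*}_{k}=\alpha_k\boldsymbol\phi_k$ for some scalar $\alpha_k\in\mathbb R$, giving the claimed one-parameter family. Finally, to see that this family is genuinely realised — i.e. that $B_k$ is indeed non-trivial as a ``free'' block rather than being forced to $\mathbf 0$ by downstream constraints — I would note that Theorem~\ref{thm:trivial-blocks} characterises trivial blocks precisely, so a block that is neither upstream of a critical block nor a sub-critical block outside the downstream cone of a critical block is non-trivial; combined with the fact that the steady states on sub-critical blocks downstream are uniquely determined by Theorem~\ref{thm:ss_sub-crit} (and hence impose no back-constraint on $\alpha_k$), any choice of $\alpha_k\ge 0$ extends to a global non-negative equilibrium $\mathbf m^{*}$.

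The main obstacle I anticipate is the bookkeeping needed to justify that $\mathbf v=\mathbf 0$ rigorously: Theorem~\ref{thm:trivial-blocks} directly gives that the \emph{immediately} upstream blocks of $B_k$ are trivial, but Eq.~(\ref{eq:one-row}) sums over \emph{all} $l<k$, including blocks that connect to $B_k$ only through longer paths. One must argue that for any $l<k$ with $C_{kl}\neq 0$, the block $B_l$ is immediately upstream of the critical block $B_k$ and hence trivial by Lemma~\ref{x}, so that $C_{kl}\mathbf m^{*}_{l}=\mathbf 0$ term by term; for $l$ with $C_{kl}=0$ the term vanishes trivially. This is exactly the reduction performed in the proof of Theorem~\ref{thm:trivial-blocks}, so it can be cited rather than re-derived. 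The remaining steps — simplicity of the Perron root and the resulting one-dimensional eigenspace — are immediate from the Perron--Frobenius discussion already in the text, so the corollary follows with little additional work.
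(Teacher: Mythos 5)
Your proposal is correct and follows essentially the same route as the paper: the paper derives this corollary directly from Theorem \ref{thm:trivial-blocks} and Eq.~(\ref{mat-exp_coop_eq}), i.e.\ the vanishing of the inhomogeneous term $\sum_{l<k}C_{kl}\mathbf m^{*}_{l}$ established there reduces the $k$-th row to $B_k\mathbf m^{*}_{k}=\mathbf 0$, and the simplicity of the Perron--Frobenius eigenvalue $0$ of the irreducible Metzler matrix $B_k$ gives the one-dimensional eigenspace. Your bookkeeping remark is resolved exactly as you suspect ($C_{kl}\neq 0$ precisely for immediately upstream blocks, which are trivial by Lemma \ref{x}), so nothing is missing.
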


Theorems \ref{thm:ss_sub-crit} and \ref{thm:trivial-blocks}, and Corollary \ref{cor:non-trivial-block_ss}, allow us to construct the most generic steady state of the system (\ref{dyn-sys_eq}), that is, the nullspace of $A$. From Theorem \ref{thm:trivial-blocks}, it follows that the set of non-trivial SCCs is exactly the set of final critical blocks, as defined before Theorem \ref{thm:ss_sub-crit}. Hence $I_f \subseteq I_c$ is also the index set of non-trivial critical blocks, that is, $I_f = \lbrace 1 \le k \le h \mid B_k \mbox{ critical and non-trivial}\rbrace$. All in all, a steady state vector $\mathbf m^* = (\mathbf m_1,\mathbf m_2,...,\mathbf m_h)^T$ has the form
\begin{align}
\label{eigenvectors_eq_1}
\mathbf m^{*}_{k} = \left\lbrace \begin{array}{ccc}
 0 &\mbox{ if }& B_k \mbox{ is upstream of any critical block,} \\ 
 \alpha_k \boldsymbol \phi_k & \mbox{ if }& k \in I_f,    \\ 
B^{-1}_{k}\left[ \sum_{l \in I_f} P_{kl} \alpha_l \boldsymbol \phi_l \right] &\mbox{ if } & B_k \mbox{ is sub-critical. }
\end{array} \right.
\end{align}
This can also be written as
\begin{align}
\label{eigenvectors_eq_2}
\mathbf m^* = \sum_{k \in I_f} \alpha_k \mathbf m^{*(k)}_l, \mbox{ with }  \mathbf m^{*(k)}_{l} = \left\lbrace \begin{array}{ccc}
 0 &\mbox{ if }& B_l \mbox{ is upstream of } B_k, \\ 
 0 &\mbox{ if }& l \in I_f \mbox{ and } l \neq k, \\
 \boldsymbol \phi_k & \mbox{ for }& l \in I_f ,   \\ 
B^{-1}_{l}\left[ P_{lk} \boldsymbol \phi_k \right] &\mbox{ if } & B_l \mbox{ is sub-critical. }
\end{array} \right.
\end{align}
Hence, the dimension of the nullspace of $A$ (equivalently, the geometric multiplicity of the eigenvalue zero) is equal to the number of non-trivial critical blocks. The algebraic multiplicity, on the other hand, is the number of all critical blocks, since according to the Perron-Frobenius Theorem for Metzler matrices, each critical block has a simple (algebraic multiplicity 1) eigenvalue zero and thus contributes once to the multiset of eigenvalues of $A$, by Eq.~\eqref{eqn:charpoly}. Recall that the system (\ref{dyn-sys_eq}) is marginally stable if and only if the dominant eigenvalue of $A$ is zero and its geometric multiplicity is equal to the algebraic multiplicity; this is thus the case only if there are no super-critical blocks and all critical blocks are non-trivial. According to Theorem \ref{thm:trivial-blocks}, this is the case if no critical block is upstream of another critical block, or alternatively, if there are no paths between any two critical blocks. We thereby arrive at
\begin{theorem}
\label{thm:marg_stab}
A dynamical system in the form of Eq.~(\ref{dyn-sys_eq}) with Jacobian matrix $A$ is marginally stable if these conditions hold:
\begin{itemize}
\item[(a)] there are no super-critical blocks;
\item[(b)] there is at least one critical block;
\item[(c)] there are no (directed) paths in $G(A)$ which connect two critical blocks.
\end{itemize}
\end{theorem}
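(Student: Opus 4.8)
The plan is to reduce the claim to the marginal-stability criterion recalled earlier — that \eqref{dyn-sys_eq} is marginally stable exactly when the eigenvalue of $A$ of largest real part is zero and its geometric multiplicity equals its algebraic multiplicity — and then to verify each half using the block structure \eqref{matrix_decomp_eq} and the results already proved. First I would locate the spectrum: under (a) every block is critical or sub-critical, so $\mu_k\le 0$ for all $k$, and by \eqref{eqn:charpoly} the spectrum of $A$ is the multiset union of the spectra of the $B_k$; since each $B_k$ is an irreducible Metzler matrix, all of its eigenvalues have real part at most $\mu_k\le 0$, so every eigenvalue of $A$ has non-positive real part. Under (b) at least one block is critical, so $0$ does occur in the spectrum and is therefore the dominant eigenvalue of $A$. (This simultaneously places us outside the unstable regime of Lemma \ref{lemma:unstable} and outside the only-trivially-solvable regime of Lemma \ref{lemma:as-stable}.)

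Next I would count the algebraic multiplicity of the eigenvalue $0$ of $A$. By \eqref{eqn:charpoly} it is the sum over $k$ of the multiplicity of $0$ as a root of $p_{B_k}$. A sub-critical block has $\mu_k<0$ equal to its largest eigenvalue real part, hence no zero eigenvalue; a critical block, by the Perron--Frobenius theorem for irreducible Metzler matrices, has $0$ as a \emph{simple} eigenvalue; and there are no super-critical blocks by (a). Hence the algebraic multiplicity of $0$ equals $|I_c|$, the number of critical blocks. For the geometric multiplicity, I would use the explicit description of $\ker A$ in \eqref{eigenvectors_eq_1}--\eqref{eigenvectors_eq_2}, already derived from Theorems \ref{thm:ss_sub-crit} and \ref{thm:trivial-blocks} and Corollary \ref{cor:non-trivial-block_ss}: the vectors $\mathbf m^{*(k)}$, $k\in I_f$, span the nullspace, and they are linearly independent because the block-$k$ component of $\mathbf m^{*(k)}$ is the nonzero Perron eigenvector $\boldsymbol\phi_k$ while the block-$k$ component of $\mathbf m^{*(k')}$ vanishes for every other $k'\in I_f$; so a vanishing linear combination forces all coefficients to vanish. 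Thus $\dim\ker A = |I_f|$.

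Finally I would connect this to the graph topology via Theorem \ref{thm:trivial-blocks}. Since $I_f\subseteq I_c$, the two multiplicities agree if and only if every critical block is non-trivial. By Theorem \ref{thm:trivial-blocks}, a critical block can be trivial only through case (i) (case (ii) applies only to sub-critical blocks), i.e. only if it is upstream of some critical block; and a critical block $B_k$ is upstream of a critical block $B_l$ precisely when $G(A)$ contains a directed path from $B_k$ to $B_l$, which is exactly what (c) forbids. Hence under (a)--(c) we have $I_f=I_c$, the geometric and algebraic multiplicities of the zero eigenvalue coincide, the dominant eigenvalue is zero, and the system is marginally stable.

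Almost all of the real work has already been done in Theorems \ref{thm:ss_sub-crit} and \ref{thm:trivial-blocks}, so what remains is essentially bookkeeping; the only two points needing a moment of care are the linear independence of the spanning set of $\ker A$ extracted from \eqref{eigenvectors_eq_2}, and the verification that no block other than a critical one contributes a zero to the spectrum (so that the algebraic multiplicity is cleanly $|I_c|$). I expect the former to be the closest thing to an obstacle, though it is immediate from the block-diagonal positions of the $\boldsymbol\phi_k$. One can run the same chain of equivalences in reverse to see that (a)--(c) are in fact also necessary, but only sufficiency is asserted in the statement.
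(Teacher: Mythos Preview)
Your proposal is correct and follows essentially the same route as the paper: identify the dominant eigenvalue as zero via (a)--(b), count the algebraic multiplicity of zero as $|I_c|$ by Perron--Frobenius on each block and \eqref{eqn:charpoly}, identify the geometric multiplicity as $|I_f|$ via the explicit nullspace description \eqref{eigenvectors_eq_1}--\eqref{eigenvectors_eq_2}, and then use Theorem~\ref{thm:trivial-blocks} to conclude that (c) forces $I_f=I_c$. Your treatment is in fact slightly more careful than the paper's in two places --- you spell out the linear-independence argument for the $\mathbf m^{*(k)}$ and explicitly verify that sub-critical blocks contribute no zero eigenvalues --- but the overall argument is the same.
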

%

%%%%%%%%%%%%%%%%%%%%%%%%%%%%%%%%%%%%%%%%%%%%
\begin{figure*}
	\includegraphics[width=0.95\columnwidth]{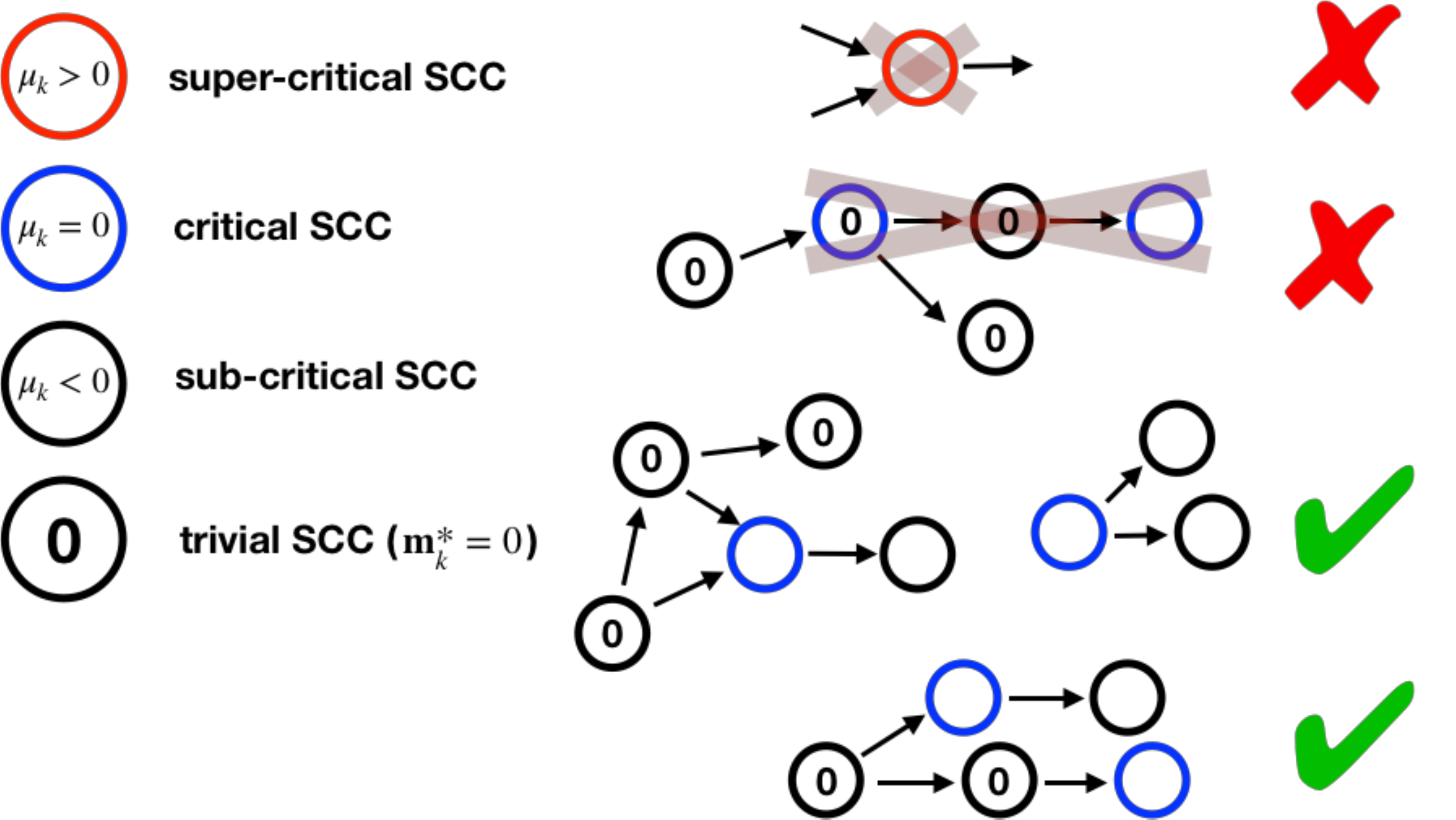}
	\caption{\label{illust_theorem_fig} Illustration of Theorem \ref{thm:marg_stab}. Circles are SCCs, according to the condensation mapping as illustrated in Fig.~\ref{illust_mapping_fig} and coloured according to their type. For a linear cooperative system to be stable, all SCCs must have non-positive eigenvalues (no super-critical SCCs), and any SCCs with dominant eigenvalue zero (critical SCCs) cannot be connected by any directed path. Configurations which allow marginally stable states are shown with a green tick, and those which are unstable with a red cross. For the former, we also mark the trivial blocks.  All non-negative, marginally stable states can be determined by setting zero all the nodes in all the trivial blocks, choose one 0-eigenvector for each critical (blue) block, and propagate them downstream using Eqs.~\eqref{eq:P-matrix} and \eqref{eq:ss_sub-crit}.}
\end{figure*}
%%%%%%%%%%%%%%%%%%%%%%%%%%%%%%%%%%%%%%%%%%%%

Part (a) and (b) follow from Lemmas \ref{lemma:unstable} and \ref{lemma:as-stable}, while part (c) follows from Eq.~(\ref{eigenvectors_eq_2}) and Theorem \ref{thm:trivial-blocks}, that if there is a path between two critical blocks, one of them must be trivial. This theorem is illustrated in Fig. \ref{illust_theorem_fig}. To summarise our findings, including the necessary definitions, we can express the stability criteria of cooperative dynamical systems as
\begin{theorem}
\label{thm:summary}
Let $A = [a_{ij}]$, with $a_{ij} \geq 0$ if $i \neq j$, be the Jacobian matrix of the linear cooperative system in Eq.~(\ref{dyn-sys_eq}), and $G(A)$ its weighted graph, defined by the edge weights $a_{ij}$ for all $i, j$. Let $B_1,\ldots,B_h$ be the adjacency matrices of the strongly connected components (SCCs) of $G(A)$. We define an SCC as \emph{critical} if its dominant eigenvalue is zero, \emph{sub-critical}, if its dominant eigenvalue is negative, and \emph{super-critical} if its dominant eigenvalue is positive.  Then:
\begin{enumerate}
\item The system is asymptotically stable if and only if all SCCs are sub-critical. In that case the steady state vanishes (is the zero-vector).
\item Otherwise, the system is marginally stable if 
\begin{itemize}
\item[(a)] there are no super-critical SCCs, and
\item[(b)] there are no paths in $G(A)$ which connect two critical SCCs.
\end{itemize}
\item Otherwise, the system is unstable.
\end{enumerate}
The corresponding equilibrium set of the system is given by Eq.~(\ref{eigenvectors_eq_1}).
\end{theorem}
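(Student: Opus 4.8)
The plan is to assemble Theorem~\ref{thm:summary} as a straightforward repackaging of the results already established in the excerpt, since each of its three clauses corresponds almost verbatim to an earlier statement. First I would dispatch clause 1 by citing Lemma~\ref{lemma:as-stable}: asymptotic stability holds iff every $B_k$ is sub-critical, and in that case $\det(A) = \prod_k \det(B_k) \neq 0$ (using Eq.~\eqref{eqn:charpoly}), so the only solution of $A\mathbf m^\ast = \mathbf 0$ is $\mathbf m^\ast = \mathbf 0$, as already noted after Lemma~\ref{lemma:as-stable}. For clause 3, I would observe that "otherwise" partitions into two cases: either some block is super-critical, in which case Lemma~\ref{lemma:unstable} gives instability directly; or no block is super-critical but condition (b) fails, i.e.\ there is a directed path connecting two critical blocks — then by Theorem~\ref{thm:trivial-blocks}(i) the upstream one of those two critical blocks is trivial, so by Corollary~\ref{cor:non-trivial-block_ss} and the eigenvector construction in Eq.~\eqref{eigenvectors_eq_2}, the geometric multiplicity of the eigenvalue $0$ (= number of non-trivial, i.e.\ final, critical blocks) is strictly less than its algebraic multiplicity (= number of all critical blocks, each contributing a simple zero by Perron--Frobenius and Eq.~\eqref{eqn:charpoly}), hence the system is not marginally stable, and since $0$ is still the dominant eigenvalue it is unstable.

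The substantive clause is clause 2, but this is exactly Theorem~\ref{thm:marg_stab} together with the argument immediately preceding it: under (a) the dominant eigenvalue of $A$ is $0$ (given that we are in the "otherwise" branch, there is at least one critical block, so condition (b) of Theorem~\ref{thm:marg_stab} is automatically met), and under (b)-of-the-summary no critical block is upstream of another, so by Theorem~\ref{thm:trivial-blocks} every critical block is non-trivial, whence geometric and algebraic multiplicities of eigenvalue $0$ coincide, which is the criterion for marginal stability. I would also point out that the three cases in the theorem are genuinely exhaustive and mutually exclusive: "all sub-critical" (clause 1), its negation subdivided by whether (a) and (b) both hold (clause 2) or not (clause 3). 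Finally I would note that the explicit form of the equilibrium set is Eq.~\eqref{eigenvectors_eq_1}, which was derived from Theorems~\ref{thm:ss_sub-crit} and~\ref{thm:trivial-blocks} and Corollary~\ref{cor:non-trivial-block_ss}.

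I do not anticipate a real obstacle here — the proof is essentially bookkeeping, collecting Lemmas~\ref{lemma:unstable}--\ref{lemma:as-stable}, Theorems~\ref{thm:ss_sub-crit}--\ref{thm:marg_stab}, and Corollaries~\ref{cor:ss_upstream_sub_crit} and~\ref{cor:non-trivial-block_ss} into one clean statement. The one point requiring a little care is making explicit the logical structure of the trichotomy and checking that in clause 2 the hypothesis "otherwise" (not all sub-critical) plus (a) "no super-critical" forces "at least one critical", so that condition (b) of Theorem~\ref{thm:marg_stab} is automatically satisfied and need not be restated in the summary; and, symmetrically, that in clause 3 the failure of marginal stability plus the persistence of $0$ as the dominant eigenvalue (in the no-super-critical sub-case) genuinely yields \emph{instability} rather than merely "not marginally stable", which follows because a zero eigenvalue whose geometric multiplicity is below its algebraic multiplicity produces a non-trivial Jordan block and hence polynomially growing solutions.
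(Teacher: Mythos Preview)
Your proposal is correct and matches the paper's approach: the paper presents Theorem~\ref{thm:summary} explicitly as a summary (``To summarise our findings\ldots'') without a separate proof, relying on exactly the chain of Lemmas~\ref{lemma:unstable}--\ref{lemma:as-stable}, Theorems~\ref{thm:ss_sub-crit}--\ref{thm:marg_stab}, and the multiplicity count preceding Theorem~\ref{thm:marg_stab} that you cite. Your treatment is in fact slightly more thorough than the paper's, in that you make explicit the Jordan-block argument for instability in clause~3 when the geometric multiplicity of $0$ falls short of the algebraic one, and you spell out why ``otherwise'' plus condition~(a) forces the existence of at least one critical block so that condition~(b) of Theorem~\ref{thm:marg_stab} is automatic.
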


\section{Conclusions}

The conditions stated in Theorem \ref{thm:summary} prescribe a way to simplify the analysis of a high-dimensional linear cooperative system by decomposition into lower dimensional subsystems, the  strongly connected components (SCCs) of the dynamical system's dependence graph. By spectral analysis of these SCCs and checking whether the topological conditions of Theorem \ref{thm:summary} are fulfilled, the system's stability can be determined. In particular, \emph{marginal stability} is of importance for linear systems, since marginally stable states represent the only possible non-vanishing stable states -- i.e. not identical to the zero-vector -- called \emph{steady states}. Such a steady state features conservation of the quantity of interest ``on average'', i.e. the mean value stays constant even if the quantity itself is not strictly conserved\footnote{We note that since a marginal steady state $\bm m^*$ is a right 0-eigenvector, by fulfilling $A \bm m^* = 0$, there must also exist a left 0-eigenvector $\bm c$, fulfilling $\bm c A = 0$. The latter equation defines a generalised conservation law with coefficients $\bm c$; however, this conservation law may be non-trivial and cannot be directly derived from $\bm m^*$.}. In contrast, asymptotically stable states are trivially vanishing for linear systems. 

Moreover, our analysis revealed that a critical SCC, i.e.~one with dominant eigenvalue zero, uniquely determines the steady state of the (necessarily sub-critical) SCCs upstream and downstream of it. In particular, the steady state configurations of all SCCs downstream of a critical SCC do generally not vanish (Theorem \ref{thm:ss_sub-crit}), and are uniquely determined by Eqs. (\ref{eq:ss_sub-crit}, \ref{eq:P-matrix}), while the steady state configurations of all SCCs upstream of it must vanish (Theorem \ref{thm:trivial-blocks}). This leads to an explicit formula (Eqs. (\ref{eigenvectors_eq_1},\ref{eigenvectors_eq_2})) to construct the steady state of the whole system by the knowledge of the steady states on the critical SCCs only.

The results, Theorems 1-4, can be seen as a generalisation of a similar condition found for linear \emph{compartmental systems}, i.e linear cooperative systems where the quantity of interest is strictly conserved (apart from external sources and sinks) \cite{Godfrey_comp-models_book1983}. For those systems, it has been found that the existence of at least one singular SCC (having an eigenvalue zero) is sufficient to ensure a non-trivial steady state \cite{FosterJacquez1975, Jacquez2008}. Notably, due to the conservation law in compartmental systems, no SCCs with positive eigenvalue may exist, meaning that all singular SCCs are critical, and furthermore, no critical SCCs can have any outgoing links. Hence, the existence of a singular SCC in a compartmental system automatically implies the conditions of our Theorem \ref{thm:marg_stab}. The stability conditions of Theorems \ref{thm:marg_stab} and \ref{thm:summary} therefore represent a generalisation to cooperative systems where the quantity of interest is not necessarily conserved, and can therefore also be applied to population dynamics where individuals can replicate and transit between different states. An example are populations of stem cells in animal tissues which differentiate, thereby changing their cell type. Note that while (cell) populations as a whole are often subject to feedback and thus follow non-linear dynamics, when considering subpopulations therein, which compete neutrally, the corresponding subsystem is linear. 

%Furthermore, we showed that all finite Markov processes are linear cooperative systems, and by applying theorem \ref{thm:summary}, some of the most fundamental results known about Markov processes follow in an immediate and elegant way, namely that (i) every finite Markov process has a steady state, (ii) this steady state is unique if and only if there is only one critical SCC in the underlying graph, and (iii) a finite Markov process is ergodic exactly if the underlying graph as a whole is strongly connected.

In general, cooperative systems can be highly complex, with a large number of variables and very complex interactions, hence represented by large and often irregular graphs. The method presented here is a way to significantly simplify the analysis of a wide range of systems, ranging from cooperative (bio-)chemical reactions to complex population dynamics, by decomposing the systems into its strongly connected components. We have shown that a spectral analysis of each SCC, and a simple graphical criterion of the connectivity between SCCs (Theorem \ref{thm:summary}, Figure \ref{illust_theorem_fig}) completely determine the stability of any linear cooperative system. This provides a unique insight into the possible configurations of cooperative systems and demonstrates the power of graph theoretic techniques in the analysis of complex dynamical systems.

\paragraph*{Funding Statement}

PG was supported by a Medical Research Council New Investigator Research Grant MR/R026610/1.

\paragraph*{Author contributions}

PG and RJSG carried out the mathematical analysis, PG, BDM, and CP conceived and designed the project. All authors helped draft the manuscript. All authors gave final approval for publication.

\paragraph*{Acknowledgements}

We thank David Chillingworth for hinting us towards some literature on cooperative systems.

%analysing the latter in an isolated manner and then check whether the connections between those components are compliant with the rules set by theorem \ref{thm:summary}. 

%\bibliographystyle{apsrev}
\bibliography{library}   

%\end{article}

\end{document}